\newtheorem{Thm}{Theorem}[section]
\newtheorem{Def}[Thm]{Definition}
\newtheorem{Lem}[Thm]{Lemma}
\newtheorem{Kor}[Thm]{Corollary}
\title{A note on Suslin matrices and Clifford algebras}
\author{Tariq Syed  \\
	Institut f{\"u}r Mathematik\\
	Johannes Gutenberg-Universit{\"a}t Mainz\\
	Staudingerweg 9\\
	55128 Mainz, Germany\\
	tariq.syed@gmx.de}
\date{\today
}
\begin{document}

\maketitle

\begin{abstract}
We give a conceptual explanation for the somewhat mysterious origin of Suslin matrices. This enables us to generalize the construction of Suslin matrices and to give more conceptual proofs of some well-known results.\\
2020 Mathematics Subject Classification: 15A66, 19A13, 19G38.\\ Keywords: Suslin matrices, Clifford algebras.
\end{abstract}

\tableofcontents

\section{Introduction}

Suslin matrices were orginally introduced by Suslin in \cite{S} in the context of his proof of the famous $n!$-theorem (cf. \cite[Theorem 2]{S}, \cite[Chapter III, \S 7]{L}) and play an important role in the study of unimodular rows and stably free modules. The well-known $n!$-theorem was substantially used in the proofs of breakthrough results on stably free modules (cf. \cite[Theorem 1]{S}, \cite[Theorem 7.5]{FRS}); Suslin matrices were also crucially used in the beautiful resolution of Suslin's cancellation conjecture on projective modules in the smooth case (cf. \cite{F}). Suslin matrices build a deeply profound and truly remarkable bridge between the theory of stably free modules, homotopy theory and the theory of quadratic forms as they can be used to define explicit KO-degree maps in Hermitian K-theory which stabilize to the unit map from the unit sphere spectrum to the Hermitian K-theory spectrum in $\mathbb{A}^1$-homotopy theory (cf. \cite{AF}). Despite the widespread use of Suslin matrices in the proofs of several groundbreaking results and their high importance in several research areas, their origin remains to be somewhat mysterious. It should be expected that any step forward in understanding the origin of these matrices can lead to applications in various mathematical research areas in the future.\\
Now let $R$ be a commutative ring and $P$ be a finitely generated projective $R$-module. It is a classical fact that the Clifford algebra $C(q)$ of the hyperbolic quadratic form $q$ on $H(P \oplus R)$ can be identified with the endomorphism ring $End(\Lambda (P \oplus R))$ of the exterior algebra of $P \oplus R$ (cf. \cite[Chapter IV, \S 2]{K}). As a matter of fact, there exists a canonical isomorphism
\begin{center}
$\Phi: C(q) \xrightarrow{\cong} End(\Lambda (P \oplus R))$
\end{center}
of $\mathbb{Z}/2\mathbb{Z}$-algebras. Of course, if $P=R^n$ for some $n \geq 0$, then $\Phi$ and the choice of an ordered basis $\mathcal{B}$ of the free $R$-module $\Lambda (R^{n+1})$ yield an identification
\begin{center}
$\Phi_{\mathcal{B}}: C(q) \xrightarrow{\cong} M_{2^{n+1}}(R)$,
\end{center}
where $M_{2^{n+1}}(R)$ denotes the set of $2^{n+1} \times 2^{n+1}$-matrices over $R$.\\
For any $a,b \in R^{n+1}$, one may define $2^n \times 2^n$-matrices $S_{n+1}(a,b)$ called Suslin matrices as well as $2^n \times 2^n$-matrices $\overline{S_{n+1}(a,b)}$. It is argued in \cite{C1} that the assignment
\begin{center}
$(a,b) \mapsto \begin{pmatrix}
0 & S_{n+1} (a,b) \\
\overline{S_{n+1}(a,b)} & 0
\end{pmatrix}$
\end{center}
induces an identification $C(q)\cong M_{2^{n+1}}(R)$ of $\mathbb{Z}/2\mathbb{Z}$-algebras; we call this isomorphism $\phi$. However, the proof of this statement is very direct and does not relate $\phi$ to $\Phi$. In this paper we prove the following main result (cf. Theorem \ref{Basis-Phi}):\\\\
\textbf{Theorem.} Let $n \geq 0$. There is an ordered basis $\mathcal{B}$ of $\Lambda (R^{n+1})$ such that $\phi = \Phi_{\mathcal{B}}$.\\\\
The theorem gives a conceptual explanation for the somewhat mysterious origin of Suslin matrices; its proof shows precisely how Suslin matrices arise naturally by means of the classical isomorphism $\Phi$. As a matter of fact, as $\Phi$ is defined for any finitely generated projective $R$-module, we can generalize the construction of Suslin matrices and define generalized Suslin matrices. As indicated above, Suslin matrices have been widely used in the study of stably free modules and we foresee that the generalized Suslin matrices will have applications to more general cancellation questions for finitely generated projective modules; this could include an $n!$-theorem for unimodular elements of finitely generated projective modules. As a first application, we can also give a significantly easier and more conceptual proof of the \textbf{Generalized Key Lemma}, which was recently proven in \cite{JR} with a lot of technical effort. Our methods even extend the results proven in \cite{JR}.\\
The paper is structured as follows: We recall the definition of Suslin matrices in Section \ref{2.1} and basic facts about Clifford algebras in Section \ref{2.2}. Then we define generalized Suslin endomorphisms and prove the main result of this paper in Section \ref{3}.

\section*{Acknowledgements}
The author would like to thank Jean Fasel for helpful comments on this work. The author was funded by the Deutsche Forschungsgemeinschaft (DFG, German Research Foundation) - Project number 461453992.

\section{Preliminaries}

Throughout this paper, $R$ will always denote a commutative ring. For any $n \in \mathbb{N}$, we will freely identify $R^n$ with ${(R^n)}^{\vee}$ and ${(R^{\vee})}^{n}$ in the usual way.

\subsection{Suslin matrices}\label{2.1}

We briefly recall the definition of Suslin matrices, which were introduced in \cite{S}.\\
Let $a=(a_{1},...,a_{n}) \in R^n = {(R^n)}^{\vee}$ and let $b=(b_{1},...,b_{n}) \in R^n$. The Suslin matrix $S_{n}(a,b)$ of $(a,b) \in R^n \oplus R^n$ is defined inductively by setting $S_{n}(a,b) = (a_{1})$ if $n=1$ and

\begin{center}
$S_{n} (a,b) = \begin{pmatrix}
a_{1} {id}_{2^{n-2}} & S_{n-1} (a',b') \\
-{S_{n-1} (b',a')}^{t} & b_{1} {id}_{2^{n-2}}
\end{pmatrix}$
\end{center}

if $n \geq 2$, where $a'=(a_{2},...,a_{n}), b'=(b_{2},...,b_{n}) \in R^{n-1}$. Analogously, one defines the matrix $\overline{S_{n}(a,b)}$ by $\overline{S_{n}(a,b)} = (b_{1})$ if $n=1$ and

\begin{center}
$\overline{S_{n} (a,b)} = \begin{pmatrix}
b_{1} {id}_{2^{n-2}} & -S_{n-1} (a',b') \\
{S_{n-1} (b',a')}^{t} & a_{1} {id}_{2^{n-2}}
\end{pmatrix}$
\end{center}

if $n \geq 2$, where again $a'=(a_{2},...,a_{n}), b'=(b_{2},...,b_{n}) \in R^{n-1}$.

\subsection{Clifford algebras}\label{2.2}

In this section, $P$ will denote a finitely generated projective $R$-module. We let
\begin{center}
$\Lambda (P) := \bigoplus_{i \geq 0} \Lambda^{i}(P)$
\end{center}
be the exterior algebra of $P$. There is an obvious $\mathbb{Z}/2\mathbb{Z}$-grading on $\Lambda (P)$ given by $\Lambda (P) = \Lambda_{0} (P) \oplus \Lambda_{1} (P)$, where $\Lambda_{0} (P) = \bigoplus_{i \geq 0} \Lambda^{2i} (P)$ and $\Lambda_{1} (P) = \bigoplus_{i \geq 0} \Lambda^{2i+1} (P)$. In particular, any endomorphism of $\Lambda (P)$ can be written as a matrix

\begin{center}
$\begin{pmatrix}
A_{0,0} & A_{1,0} \\
A_{0,1} & A_{1,1}
\end{pmatrix}$,
\end{center}

where $A_{i,j}: \Lambda_{i} (P) \rightarrow \Lambda_{j} (P)$ for $i,j \in \{0,1\}$. This gives the ring $End (\Lambda (P))$ of endomorphisms of $\Lambda (P)$ the structure of a $\mathbb{Z}/2\mathbb{Z}$-graded algebra over $R$ by declaring an endomorphism of the form

\begin{center}
$\begin{pmatrix}
A_{0,0} & 0 \\
0 & A_{1,1}
\end{pmatrix}$
\end{center}

of degree $0$ and an endomorphism of the form

\begin{center}
$\begin{pmatrix}
0 & A_{1,0} \\
A_{0,1} & 0
\end{pmatrix}$
\end{center}

of degree $1$.\\
Now consider $H (P \oplus R) := P \oplus R \oplus P^{\vee} \oplus R^\vee$. We have the usual hyperbolic quadratic form on $H (P \oplus R)$ given by $q (p,a,f,b) = f(p) + ab$. Consider the tensor algebra
\begin{center}
$T (H(P \oplus R)) = \bigoplus_{i \geq 0} T^i (H(P \oplus R))$
\end{center}
of $H (P \oplus R)$ over $R$, where $T^0 (H(P \oplus R)) = R$ and, for $i \geq 1$, $T^i (H(P \oplus R))$ is the $i$-fold tensor product of $i$ copies of $H(P \oplus R)$ over $R$. The Clifford algebra is then defined as follows  (cf. \cite[Chapter IV, \S 1]{K}):
\begin{Def}
The Clifford algebra $C(q)$ is the quotient of $T (H(P \oplus R))$ modulo its two-sided ideal generated by the relations $(p, a, f, b) \otimes (p, a, f, b) = q (p,a,f,b)$ for all $(p,a,f,b) \in H (P\oplus R)$.
\end{Def}
It follows immediately from this definition that the Clifford algebra is a $\mathbb{Z}/2\mathbb{Z}$-graded $R$-algebra. It is well-known that there is a canonical isomorphism

\begin{center}
$\Phi: C (q) \rightarrow End (\Lambda (P \oplus R))$
\end{center}

of $\mathbb{Z}/2\mathbb{Z}$-graded $R$-algebras (cf. \cite[Chapter IV, \S 2, Proposition 2.1.1]{K}). This isomorphism is given as follows:\\
Let $x = (p,a,f,b) \in H (P \oplus R)$. Then left multiplication with $x' = (p,a) \in P \oplus R$ determines a homomorphism

\begin{center}
$l_{x'}: \Lambda (P \oplus R) \rightarrow \Lambda (P \oplus R)$.
\end{center}

If we let ${x''} = (f,b) \in P^{\vee} \oplus R^\vee = {(P \oplus R)}^{\vee}$, then ${x''}$ determines a homomorphism

\begin{center}
$d_{x''}: \Lambda (P \oplus R) \rightarrow \Lambda (P \oplus R)$.
\end{center}

given by $d_{x''} (a_{1} \wedge ... \wedge a_{r}) = \sum_{i=1}^{r} {(-1)}^{i+1} {x''}(a_{i})~a_{1} \wedge ... \wedge \widehat{a_{i}}\wedge ... \wedge a_{r}$. Then the endomorphism $l_{x'} + d_{x''}$ satisfies

\begin{center}
${(l_{x'} + d_{x''})}^{2} = q(p,a,f,b) \cdot Id_{\Lambda (P \oplus R)}$
\end{center}

and hence induces the map

\begin{center}
$\Phi: C (q) \rightarrow End (\Lambda (P \oplus R))$,
\end{center}

which is the isomorphism mentioned above. For any $x = (p,a,f,b)$ as above, note that we can write $\Phi (x)$ in the form

\begin{center}
$\begin{pmatrix}
0 & \Phi_{1,0}(x) \\
\Phi_{0,1}(x) & 0
\end{pmatrix}$.
\end{center}

Since ${(\Phi (x))}^{2} = q(p,a,f,b) \cdot Id_{\Lambda (P \oplus R)}$ as mentioned above, it follows easily that 

\begin{center}
$\Phi_{1,0}(x)\Phi_{0,1}(x) = q(p,a,f,b) \cdot Id_{\Lambda_{0}(P \oplus R)}$
\end{center}

and

\begin{center}
$\Phi_{0,1}(x)\Phi_{1,0}(x)=q(p,a,f,b) \cdot Id_{\Lambda_{1}(P \oplus R)}$.
\end{center}

In particular, if $q (p,a,f,b) \in R^{\times}$, then $\Phi_{0,1}(x)$ and $\Phi_{1,0}(x)$ are isomorphisms. If we let $x_{0} = (0,1,0,1) \in H (P \oplus R)$, then $\Phi_{0,1}(x_{0})$ is an isomorphism with inverse $\Phi_{1,0}(x_{0})$.

\section{Generalized Suslin endomorphisms}\label{3}

\begin{Def}
For any $x = (p,a,q,b) \in H (P \oplus R)$, we define

\begin{center}
$S (x): \Lambda_{1} (P \oplus R) \xrightarrow{\Phi_{1,0}(x)} \Lambda_{0} (P \oplus R) \xrightarrow{\Phi_{0,1}(x_{0})} \Lambda_{1} (P \oplus R)$.\\
$\overline{S(x)}: \Lambda_{1} (P \oplus R) \xrightarrow{\Phi_{1,0}(x_{0})} \Lambda_{0} (P \oplus R) \xrightarrow{\Phi_{0,1}(x)} \Lambda_{1} (P \oplus R)$.
\end{center}
We call $S(x)$ the generalized Suslin matrix or the generalized Suslin endomorphism of $x \in H (P \oplus R)$.
\end{Def}

Since $\Lambda^{0}(R)=R$, $\Lambda^{1}(R)=R$ and $\Lambda^{i}(R)=0$ for $i \geq 2$, there are canonical isomorphisms
\begin{center}
$\Lambda^{i} (P \oplus R) \cong (\Lambda^{i} (P)\otimes_{R}\Lambda^{0}(R)) \oplus (\Lambda^{i-1} (P)\otimes_{R}\Lambda^{1}(R)) \cong \Lambda^{i} (P) \oplus \Lambda^{i-1} (P)$
\end{center}
for $i \geq 0$, where we let $\Lambda^{-1}(P)=0$ by convention. As a matter of fact, we therefore obtain canonical isomorphisms
\begin{center}
$\Lambda_{0} (P \oplus R) \cong \Lambda_{0} (P) \oplus \Lambda_{1} (P)$
\end{center}
and
\begin{center}
$\Lambda_{1} (P \oplus R) \cong \Lambda_{0} (P) \oplus \Lambda_{1} (P)$.
\end{center}


An easy direct computation by means of the canonical isomorphisms above then shows that $\Phi_{1,0}(x)$ can be written as the matrix

\begin{center}
$\begin{pmatrix}
b \cdot Id_{\Lambda_{0} (P)} & l_p + d_f \\
l_p + d_f & -a \cdot Id_{\Lambda_{1} (P)}
\end{pmatrix}$
\end{center}

and $\Phi_{0,1} (x)$ can be written as the matrix

\begin{center}
$\begin{pmatrix}
a \cdot Id_{\Lambda_{0} (P)} & l_p + d_f \\
l_p + d_f & -b \cdot Id_{\Lambda_{1} (P)}
\end{pmatrix}$
\end{center}

with respect to the canonical decompositions $\Lambda_{0} (P \oplus R) \cong \Lambda_{0} (P) \oplus \Lambda_{1} (P)$ and $\Lambda_{1} (P \oplus R) \cong \Lambda_{0} (P) \oplus \Lambda_{1} (P)$ mentioned above. In particular, $\Phi_{0,1} (x_{0})$ and $\Phi_{1,0}(x_{0})$ are given by the matrix

\begin{center}
$\begin{pmatrix}
Id_{\Lambda_{0} (P)} & 0 \\
0 & -Id_{\Lambda_{1} (P)}
\end{pmatrix}$.
\end{center}

Altogether, it follows that the endomorphism $S(x)$ is given by the matrix

\begin{center}
$\begin{pmatrix}
b \cdot Id_{\Lambda_{0} (P)} & l_p + d_f \\
-(l_p + d_f) & a \cdot Id_{\Lambda_{1} (P)}
\end{pmatrix}$
\end{center}

and that the endomorphism $\overline{S(x)}$ is given by the matrix

\begin{center}
$\begin{pmatrix}
a \cdot Id_{\Lambda_{0} (P)} & -(l_p + d_f) \\
l_p + d_f & b \cdot Id_{\Lambda_{1} (P)}
\end{pmatrix}$.
\end{center}

Now assume that $P = R^n$ is a free $R$-module of rank $n$. We want to show that our definition of the generalized Suslin matrices actually generalizes Suslin's original definition of Suslin matrices. We need the following small lemma:

\begin{Lem}\label{Bases-Lemma}
Let $n \geq 0$. Assume that there is an ordered basis $\mathcal{B}_1$ of $\Lambda_{1} (R^{n+1})$ such that for all $x = (p,a,f,b) \in R^{n+1} \oplus {(R^{n+1})}^{\vee} = R^{n+1} \oplus R^{n+1}$ the matrices representing $S(x)$ and $\overline{S (x)}$ with respect to $\mathcal{B}_1$ are just $S_{n+1} (b,f,a,p)$ and $\overline{S_{n+1} (b,f,a,p)}$. Then there is an ordered basis $\mathcal{B}_0$ of $\Lambda_{0} (R^{n+1})$ such that for all $x = (p,a,f,b) \in R^{n+1} \oplus {(R^{n+1})}^{\vee} = R^{n+1} \oplus R^{n+1}$ the matrices representing $\Phi_{1,0}(x)$ and $\Phi_{0,1} (x)$ with respect to $\mathcal{B}_1$ and $\mathcal{B}_0$ are just $S_{n+1} (b,f,a,p)$ and $\overline{S_{n+1} (b,f,a,p)}$ as well.
\end{Lem}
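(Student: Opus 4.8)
The plan is to transport the given ordered basis $\mathcal{B}_1$ of $\Lambda_1(R^{n+1})$ to an ordered basis of $\Lambda_0(R^{n+1})$ by means of one of the two mutually inverse isomorphisms $\Phi_{0,1}(x_0)$ and $\Phi_{1,0}(x_0)$, and then to read off the desired matrices directly from the factorizations defining $S(x)$ and $\overline{S(x)}$. Concretely, I would set $\mathcal{B}_0 := \Phi_{1,0}(x_0)(\mathcal{B}_1)$, that is, apply the isomorphism $\Phi_{1,0}(x_0) \colon \Lambda_1(R^{n+1}) \to \Lambda_0(R^{n+1})$ to each vector of $\mathcal{B}_1$, keeping the order. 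Since $\Phi_{1,0}(x_0)$ sends the $i$-th vector of $\mathcal{B}_1$ to the $i$-th vector of $\mathcal{B}_0$ by construction, its matrix with respect to $\mathcal{B}_1$ and $\mathcal{B}_0$ is the identity; and because $\Phi_{0,1}(x_0)$ is its inverse (as recorded at the end of Section \ref{2.2}), the matrix of $\Phi_{0,1}(x_0)$ with respect to $\mathcal{B}_0$ and $\mathcal{B}_1$ is the identity as well.

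Next I would exploit the two defining factorizations. As $\Phi_{0,1}(x_0)$ and $\Phi_{1,0}(x_0)$ are mutually inverse, the identity $S(x) = \Phi_{0,1}(x_0) \circ \Phi_{1,0}(x)$ can be rewritten as $\Phi_{1,0}(x) = \Phi_{1,0}(x_0) \circ S(x)$, and similarly $\overline{S(x)} = \Phi_{0,1}(x) \circ \Phi_{1,0}(x_0)$ yields $\Phi_{0,1}(x) = \overline{S(x)} \circ \Phi_{0,1}(x_0)$. By hypothesis the matrix of $S(x)$ with respect to $\mathcal{B}_1$ is $S_{n+1}(b,f,a,p)$ and that of $\overline{S(x)}$ is $\overline{S_{n+1}(b,f,a,p)}$. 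Multiplying by the identity matrices recorded in the previous step (being careful that the matrix of a composite $h \circ g$ is the product of the matrix of $h$ with the matrix of $g$), I conclude that the matrix of $\Phi_{1,0}(x)$ with respect to $\mathcal{B}_1$ and $\mathcal{B}_0$ is exactly $S_{n+1}(b,f,a,p)$, while the matrix of $\Phi_{0,1}(x)$ with respect to $\mathcal{B}_0$ and $\mathcal{B}_1$ is exactly $\overline{S_{n+1}(b,f,a,p)}$, which is the claim.

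There is no deep obstacle here; the argument is essentially bookkeeping with change of basis. The one point requiring genuine care is that a \emph{single} choice of $\mathcal{B}_0$ must simultaneously trivialize $\Phi_{1,0}(x_0)$ appearing in the factorization of $S(x)$ and $\Phi_{0,1}(x_0)$ appearing in the factorization of $\overline{S(x)}$. This succeeds precisely because these two isomorphisms are inverse to one another, so defining $\mathcal{B}_0$ through $\Phi_{1,0}(x_0)$ automatically forces $\Phi_{0,1}(x_0)$ to have the identity matrix in the opposite direction. Beyond keeping the composition order $\Lambda_1 \to \Lambda_0 \to \Lambda_1$ and the matrix-of-a-composite convention straight, nothing else is needed.
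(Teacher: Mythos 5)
Your proposal is correct and takes exactly the same route as the paper: the paper's proof consists precisely of defining $\mathcal{B}_0$ as the image of $\mathcal{B}_1$ under $\Phi_{1,0}(x_0)$, leaving the verification implicit. Your spelled-out check via the factorizations $\Phi_{1,0}(x) = \Phi_{1,0}(x_0) \circ S(x)$ and $\Phi_{0,1}(x) = \overline{S(x)} \circ \Phi_{0,1}(x_0)$ is exactly the bookkeeping the author omits.
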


\begin{proof}
If $\mathcal{B}_1$ is given by $b_{1},...,b_{2^{n}}$, then just define $\mathcal{B}_0$ to be the ordered basis given by $\Phi_{1,0}(x_{0}) (b_{1}), ..., \Phi_{1,0}(x_{0})(b_{2^{n}})$.
\end{proof}

Now let $n=0$, i.e., $P=0$. Then we have $\Lambda (P \oplus R) = \Lambda (R) = \Lambda^0 (R) \oplus \Lambda^1 (R) = R \oplus R$. Using this decomposition of $\Lambda (R)$, the endomorphism $S(x)$ is just multiplication by $b \in R$ and $\overline{S(x)}$ is just multiplication by $a$. So both endomorphisms correspond to $S_1 (b,a)$ and $\overline{S_1(b,a)}$ respectively.\\
Now assume $n = 1$. Then $\Lambda_{0} (P) = \Lambda_{1} (P) = R$, $p,a \in R$, $f,b \in R^\vee = R$ and it follows that $S(x)$ is in fact given by the matrix

\begin{center}
$\begin{pmatrix}
b & f \\
-p & a
\end{pmatrix}$.
\end{center}

This is just the Suslin matrix $S_{2} (b,f,a,p)$ of the rows $(b,f)$ and $(a,p)$. It also follows that the endomorphism $\overline{S (x)}$ is given by the matrix

\begin{center}
$\begin{pmatrix}
a & -f \\
p & b
\end{pmatrix}$.
\end{center}

This is just the matrix $\overline{S_{2} (b,f,a,p)}$ of the elements $(b,f)$ and $(a,p)$.\\
Now assume $n \geq 2$ and $P = R^{n}$. By induction, we may assume that there is an ordered basis $\mathcal{B}_1$ of $\Lambda_{1} (P)$ such that both $S (p,f): \Lambda_{1} (P) \rightarrow \Lambda_{1} (P)$ and $\overline{S (p,f)}: \Lambda_{1} (P) \rightarrow \Lambda_{1} (P)$ are represented by $S_{n} (f,p)$ and $\overline{S_{n} (f,p)}$ with respect to $\mathcal{B}_1$ for all $(p,f) \in R^{n} \oplus {(R^{n})}^{\vee} = R^{n} \oplus R^{n}$. Consequently, by the lemma above, we conclude that there is an ordered basis $\mathcal{B}_0$ of $\Lambda_{0} (P)$ such that $\Phi_{1,0} (p,f) = l_p + d_f : \Lambda_{1} (P) \rightarrow \Lambda_{0} (P)$ and $\Phi_{0,1} (p,f) = l_p + d_f : \Lambda_{0} (P) \rightarrow \Lambda_{1} (P)$ are represented by $S_{n} (f,p)$ and $\overline{S_{n} (f,p)}$ with respect to $\mathcal{B}_1$ and $\mathcal{B}_0$ for all $x = (p,f) \in R^{n} \oplus {(R^{n})}^{\vee} = R^{n} \oplus R^{n}$. If we then let $\mathcal{B}$ be the ordered basis of $\Lambda_{1} (P \oplus R) \cong \Lambda_{0} (P) \oplus \Lambda_{1} (P)$ obtained by combining $\mathcal{B}_1$ and $\mathcal{B}_0$, then $S (x)$ and $\overline{S (x)}$ are automatically represented by $S_{n+1} (b,f,a,p)$ and $\overline{S_{n+1} (b,f,a,p)}$ with respect to $\mathcal{B}$ for all $x= (p,a,f,b) \in R^{n+1} \oplus {(R^{n+1})}^{\vee} = R^{n+1} \oplus R^{n+1}$. Thus, by induction, we have proven:

\begin{Thm}\label{Justification}
Let $n \geq 0$. There is an ordered basis $\mathcal{B}_1$ of $\Lambda_{1} (R^{n+1})$ such that for all $x = (p,a,f,b) \in R^{n+1} \oplus {(R^{n+1})}^{\vee} = R^{n+1} \oplus R^{n+1}$ the matrices representing $S(x)$ and $\overline{S (x)}$ with respect to $\mathcal{B}_1$ are just $S_{n+1} (b,f,a,p)$ and $\overline{S_{n+1} (b,f,a,p)}$.
\end{Thm}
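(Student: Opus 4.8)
The plan is to prove the statement by induction on $n$, i.e. on the rank of the free module $P = R^n$ sitting inside $R^{n+1} = P \oplus R$. The whole argument rests on two ingredients that are already available above. First, the explicit block description of $S(x)$ and $\overline{S(x)}$ over the canonical decomposition $\Lambda_1(R^{n+1}) \cong \Lambda_0(R^n) \oplus \Lambda_1(R^n)$, namely
\[
S(x) = \begin{pmatrix} b\cdot \mathrm{Id} & l_p + d_f \\ -(l_p+d_f) & a\cdot\mathrm{Id}\end{pmatrix}, \qquad \overline{S(x)} = \begin{pmatrix} a\cdot\mathrm{Id} & -(l_p+d_f) \\ l_p+d_f & b\cdot\mathrm{Id}\end{pmatrix}.
\]
Second, Lemma \ref{Bases-Lemma}, which upgrades a good basis for $S$ and $\overline{S}$ on $\Lambda_1(R^n)$ to a compatible basis $\mathcal{B}_0$ of $\Lambda_0(R^n)$ for which the off-diagonal Clifford maps $\Phi_{1,0}$ and $\Phi_{0,1}$ are represented by the same Suslin matrices.

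First I would dispose of the base cases $n=0$ and $n=1$ by direct computation. For $n=0$ one has $\Lambda(R) = R \oplus R$ with $\Lambda_1(R) = R$, and the block formula collapses to multiplication by $b$ for $S(x)$ and by $a$ for $\overline{S(x)}$, i.e. to $S_1(b,a)$ and $\overline{S_1(b,a)}$. For $n=1$ one has $\Lambda_0(R)=\Lambda_1(R)=R$, and $l_p + d_f$ reduces to multiplication by $f$ (resp. by $p$) on the two off-diagonal slots, so that $S(x)$ is $\begin{pmatrix} b & f \\ -p & a\end{pmatrix} = S_2(b,f,a,p)$, and likewise $\overline{S(x)} = \overline{S_2(b,f,a,p)}$. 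This anchors the induction.

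For the inductive step I would assume the statement for $R^n$: there is an ordered basis $\mathcal{B}_1$ of $\Lambda_1(R^n)$ with respect to which $S(p,f)$ and $\overline{S(p,f)}$ are $S_n(f,p)$ and $\overline{S_n(f,p)}$. Lemma \ref{Bases-Lemma} then produces a basis $\mathcal{B}_0$ of $\Lambda_0(R^n)$ so that the blocks $\Phi_{1,0}(p,f)=l_p+d_f$ and $\Phi_{0,1}(p,f)=l_p+d_f$ are represented by $S_n(f,p)$ and $\overline{S_n(f,p)}$ as well. Combining $\mathcal{B}_0$ and $\mathcal{B}_1$ into a basis $\mathcal{B}$ of $\Lambda_1(R^{n+1}) \cong \Lambda_0(R^n) \oplus \Lambda_1(R^n)$ and substituting these representing matrices into the block description above, I would read off
\[
S(x) = \begin{pmatrix} b\cdot\mathrm{Id} & S_n(f,p) \\ -\overline{S_n(f,p)} & a\cdot\mathrm{Id}\end{pmatrix}, \qquad \overline{S(x)} = \begin{pmatrix} a\cdot\mathrm{Id} & -S_n(f,p) \\ \overline{S_n(f,p)} & b\cdot\mathrm{Id}\end{pmatrix},
\]
and compare these with the recursive definitions of $S_{n+1}(b,f,a,p)$ and $\overline{S_{n+1}(b,f,a,p)}$.

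The one place where care is needed, and which I expect to be the main (if modest) obstacle, is this final comparison. The recursion writes the lower-left block of $S_{n+1}(b,f,a,p)$ as $-S_n(p,f)^t$, whereas the Clifford computation delivers $-\overline{S_n(f,p)}$; reconciling the two amounts to the elementary identity $\overline{S_n(a,b)} = S_n(b,a)^t$, which follows at once from comparing the two defining recursions for $S_n$ and $\overline{S_n}$. The remaining subtlety is purely notational: one must keep the coordinate orderings straight when identifying $R^{n+1}$ with $R^n \oplus R$ and when passing between $x=(p,a,f,b)$ and the Suslin argument $(b,f,a,p)$, so that the scalars $b$ and $a$ land in the correct diagonal blocks. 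Once these identifications are fixed, the match of all four blocks is forced, and the induction closes.
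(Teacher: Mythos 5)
Your proof is correct and follows essentially the same route as the paper: induction on $n$, with the base cases $n=0,1$ checked by hand, Lemma \ref{Bases-Lemma} supplying the compatible basis $\mathcal{B}_0$ of $\Lambda_0(R^n)$, and the block description of $S(x)$ and $\overline{S(x)}$ over $\Lambda_1(R^{n+1})\cong\Lambda_0(R^n)\oplus\Lambda_1(R^n)$ closing the induction. The only difference is that you spell out the identity $\overline{S_n(a,b)}=S_n(b,a)^t$ needed to reconcile the lower-left block with the recursive definition of $S_{n+1}$, a step the paper simply declares ``automatic''.
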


This justifies our terminology. We now list some useful properties of the generalized Suslin matrices, which are verified immediately:

\begin{Lem}
Let $P$ be a finitely generated projective $R$-module of rank $n \geq 0$. Then
\begin{itemize}
\item[a)] $S(x)\overline{S (x)} = \overline{S (x)}S(x)= q(x) \cdot id_{\Lambda_{1} (P \oplus R)}$ for $x = (p,a,f,b) \in H (P \oplus R)$.
\item[b)] $S(p,a,f,b)=S(0,0,f,b) + S(p,a,0,0)$ for $x=(p,a,f,b) \in H (P \oplus R)$.
\item[c)] $S (rp,ra,rf,rb)=rS(p,a,f,b)$ for $x=(p,a,f,b) \in H (P \oplus R)$ and $r \in R$.
\item[d)] $S(p_{1},a_{1},0,0) + S(p_{2},a_{2},0,0) = S(p_{1}+p_{2},a_{1}+a_{2},0,0)$ for $p_{1},p_{2} \in P$ and $a_{1},a_{2} \in R$;
\item[e)] $S(0,0,f_{1},b_{1})+S(0,0,f_{2},b_{2})=S(0,0,f_{1}+f_{2},b_{1}+b_{2})$ for $f_{1},f_{2} \in P^\vee$ and $b_{1},b_{2} \in R^\vee = R$;
\item[f)] $det (S(x)) = {(q(x))}^{2^{n-1}}$ for $x = (p,a,f,b) \in H (P \oplus R)$.
\item[g)] If $x = (p,a,f,b) \in H (P \oplus R)$, then $S(x) \in Aut (\Lambda_{1} (P \oplus R))$ if and only if $q(x) \in R^{\times}$.
\item[h)] $\overline{S (x)} = S (-p,b,-f,a)$ for $x=(p,a,f,b) \in H (P \oplus R)$.
\end{itemize}
\end{Lem}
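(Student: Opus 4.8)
The plan is to read off almost everything directly from the two explicit block descriptions
\[
S(x) = \begin{pmatrix} b\cdot Id_{\Lambda_{0}(P)} & l_p+d_f \\ -(l_p+d_f) & a\cdot Id_{\Lambda_{1}(P)}\end{pmatrix}, \qquad \overline{S(x)} = \begin{pmatrix} a\cdot Id_{\Lambda_{0}(P)} & -(l_p+d_f) \\ l_p+d_f & b\cdot Id_{\Lambda_{1}(P)}\end{pmatrix}
\]
established above, using only that $p\mapsto l_p$ and $f\mapsto d_f$ are $R$-linear and that $(l_p+d_f)^2 = f(p)\cdot Id$ (the anticommutation relation $l_pd_f+d_fl_p=f(p)\cdot Id$ together with $l_p^2=d_f^2=0$). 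Parts (b)--(e) and (h) then fall out immediately: (b)--(e) simply record that each matrix entry is additive, resp.\ $R$-linear, in the relevant components of $x$, and for (h) one substitutes $(-p,b,-f,a)$ into the formula for $S(-)$ and compares, using $l_{-p}=-l_p$ and $d_{-f}=-d_f$. For (a) I would avoid the block multiplication and argue conceptually from the definitions: since $S(x)=\Phi_{0,1}(x_{0})\Phi_{1,0}(x)$ and $\overline{S(x)}=\Phi_{0,1}(x)\Phi_{1,0}(x_{0})$, the relations $\Phi_{1,0}(x)\Phi_{0,1}(x)=q(x)\cdot Id_{\Lambda_{0}}$ and $\Phi_{1,0}(x_{0})\Phi_{0,1}(x_{0})=Id_{\Lambda_{0}}$, $\Phi_{0,1}(x_{0})\Phi_{1,0}(x_{0})=Id_{\Lambda_{1}}$ give
\[
S(x)\overline{S(x)} = \Phi_{0,1}(x_{0})\big[\Phi_{1,0}(x)\Phi_{0,1}(x)\big]\Phi_{1,0}(x_{0}) = q(x)\cdot Id_{\Lambda_{1}(P\oplus R)}
\]
and symmetrically $\overline{S(x)}S(x)=\Phi_{0,1}(x)\big[\Phi_{1,0}(x_{0})\Phi_{0,1}(x_{0})\big]\Phi_{1,0}(x)=q(x)\cdot Id_{\Lambda_{1}(P\oplus R)}$.

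The real content is (f), which I expect to be the main obstacle, and where I take $n\geq 1$ (the exponent $2^{n-1}$ forces this; for $n=0$ one has $S(x)=(b)$ and no such formula holds). Since the determinant of an endomorphism of a finitely generated projective module commutes with localization and the claim is an equality of elements of $R$, it suffices to verify it after localizing at each prime of $R$, where $P$ becomes free of rank $n$. In the free case I would compute $\det S(x)$ by a Schur-complement expansion of the block matrix: working over $R[b^{-1}]$, the top-left block $b\cdot Id_{\Lambda_{0}(P)}$ is invertible, and the Schur complement of the bottom-right block is
\[
a\cdot Id_{\Lambda_{1}(P)} + b^{-1}(l_p+d_f)^2 = a\cdot Id + b^{-1}f(p)\cdot Id = b^{-1}q(x)\cdot Id_{\Lambda_{1}(P)},
\]
so that, since $\Lambda_{0}(P)$ and $\Lambda_{1}(P)$ both have rank $2^{n-1}$,
\[
\det S(x) = b^{2^{n-1}}\cdot\big(b^{-1}q(x)\big)^{2^{n-1}} = q(x)^{2^{n-1}}.
\]

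To remove the hypothesis that $b$ be invertible I would invoke permanence of identities: both sides are specializations of fixed elements of the universal polynomial ring $A$ on the coordinates of $x$, and the computation above proves the identity already in the domain $A[b^{-1}]$; since $A$ injects into $A[b^{-1}]$ the identity holds in $A$, hence in every $R$ and for every $x$. An equally clean alternative, matching (a) and (h): conjugation by the block swap identifies $S(x)$ with $\overline{S(x)}$ in the free case, so $\det S(x)=\det\overline{S(x)}$, whence $\big(\det S(x)\big)^{2}=q(x)^{2^{n}}$ by (a), forcing $\det S(x)=\pm q(x)^{2^{n-1}}$ with the sign pinned to $+$ by specializing $p=0$, $f=0$.

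Finally, (g) is a formal consequence, again for $n\geq 1$. If $q(x)\in R^{\times}$, then (a) exhibits $q(x)^{-1}\overline{S(x)}$ as a two-sided inverse of $S(x)$, so $S(x)\in Aut(\Lambda_{1}(P\oplus R))$. Conversely, if $S(x)$ is an automorphism then $\det S(x)\in R^{\times}$, and by (f) this equals $q(x)^{2^{n-1}}$; since $q(x)\cdot q(x)^{2^{n-1}-1}$ is then a unit, $q(x)\in R^{\times}$.
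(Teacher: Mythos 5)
Your proposal is correct, and it is worth noting that the paper offers no argument at all for this lemma: it simply asserts that the properties ``are verified immediately,'' implicitly from the block descriptions of $S(x)$ and $\overline{S(x)}$ derived just before the statement. Your treatment of (b)--(e) and (h) is exactly that intended reading. For (a) you take a genuinely different (and arguably better) route: instead of multiplying the blocks and using $(l_p+d_f)^2=f(p)\cdot Id$, you factor through the defining compositions $S(x)=\Phi_{0,1}(x_0)\Phi_{1,0}(x)$ and $\overline{S(x)}=\Phi_{0,1}(x)\Phi_{1,0}(x_0)$ and use the relations $\Phi_{1,0}(y)\Phi_{0,1}(y)=q(y)\cdot Id$ together with $q(x_0)=1$; this makes (a) independent of the matrix computation and works verbatim for any $P$. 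The real added value is (f), which is not ``immediate'': your localization to the free case, Schur complement over $R[b^{-1}]$, and descent to the universal polynomial ring via $A\hookrightarrow A[b^{-1}]$ is a complete and standard way to pin down the determinant, and your alternative via $(\det S(x))^2=q(x)^{2^n}$ plus the specialization $p=f=0$ is equally valid in the universal (domain) setting. Your deduction of (g) from (a) and (f) is also the natural one.

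You are also right to flag the edge case $n=0$: there $S(x)$ is multiplication by $b$ on a rank-one module, so the exponent $2^{n-1}$ in (f) is not even an integer and the converse direction of (g) fails (e.g.\ $a=0$, $b=1$ gives $S(x)=id$ but $q(x)=0$). This is a defect of the statement as printed rather than of your proof; restricting (f) and the ``only if'' half of (g) to $n\geq 1$, as you do, is the correct reading.
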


\begin{Thm}\label{Basis-Phi}
Let $n \geq 0$. There are ordered bases $\mathcal{B}_1$ of $\Lambda_{1} (R^{n+1})$ and $\mathcal{B}_0$ of $\Lambda_{0} (R^{n+1})$ such that for all $x = (p,a,f,b) \in R^{n+1} \oplus {(R^{n+1})}^{\vee} = R^{n+1} \oplus R^{n+1}$ the matrices representing $\Phi_{1,0}(x)$ and $\Phi_{0,1} (x)$ with respect to $\mathcal{B}_1$ and $\mathcal{B}_0$ are just $S_{n+1} (b,f,a,p)$ and $\overline{S_{n+1} (b,f,a,p)}$.
\end{Thm}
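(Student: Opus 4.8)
The plan is to derive the theorem immediately by combining the two results already established, namely Theorem \ref{Justification} and Lemma \ref{Bases-Lemma}. Theorem \ref{Justification} produces an ordered basis $\mathcal{B}_1$ of $\Lambda_{1}(R^{n+1})$ with respect to which $S(x)$ and $\overline{S(x)}$ are represented by $S_{n+1}(b,f,a,p)$ and $\overline{S_{n+1}(b,f,a,p)}$ for every $x = (p,a,f,b)$. This is exactly the hypothesis of Lemma \ref{Bases-Lemma}. So I would first invoke Theorem \ref{Justification} to fix such a $\mathcal{B}_1$, and then feed this $\mathcal{B}_1$ into Lemma \ref{Bases-Lemma}; its conclusion delivers an ordered basis $\mathcal{B}_0$ of $\Lambda_{0}(R^{n+1})$ such that, relative to $\mathcal{B}_1$ and $\mathcal{B}_0$, the components $\Phi_{1,0}(x)$ and $\Phi_{0,1}(x)$ are represented precisely by $S_{n+1}(b,f,a,p)$ and $\overline{S_{n+1}(b,f,a,p)}$. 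This is verbatim the assertion of the theorem, so the argument is complete.

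It is worth recalling why the basis supplied by Lemma \ref{Bases-Lemma} does the job, since this is where the only real content sits. Writing $\mathcal{B}_1 = (b_{1},\dots,b_{2^{n}})$, the lemma sets $\mathcal{B}_0 = (\Phi_{1,0}(x_{0})(b_{1}),\dots,\Phi_{1,0}(x_{0})(b_{2^{n}}))$. Since $\Phi_{0,1}(x_{0})$ and $\Phi_{1,0}(x_{0})$ are mutually inverse isomorphisms, the auxiliary factor built into the definitions $S(x) = \Phi_{0,1}(x_{0})\circ\Phi_{1,0}(x)$ and $\overline{S(x)} = \Phi_{0,1}(x)\circ\Phi_{1,0}(x_{0})$ cancels against this change of basis. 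Concretely, the matrix of $\Phi_{1,0}(x)$ relative to $(\mathcal{B}_1,\mathcal{B}_0)$ coincides with the matrix of $S(x) = \Phi_{0,1}(x_{0})\circ\Phi_{1,0}(x)$ relative to $\mathcal{B}_1$, and similarly the matrix of $\Phi_{0,1}(x)$ relative to $(\mathcal{B}_0,\mathcal{B}_1)$ coincides with the matrix of $\overline{S(x)}$ relative to $\mathcal{B}_1$.

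Because both ingredients are already in place, there is no genuine obstacle left at this step; the substantive work was done earlier. In particular, the real effort lies in the inductive identification of $S(x)$ and $\overline{S(x)}$ with the Suslin matrices in Theorem \ref{Justification}, which rests on matching the block decomposition $\Lambda_{i}(P \oplus R) \cong \Lambda_{0}(P) \oplus \Lambda_{1}(P)$ with the inductive block structure of Suslin's original definition, together with the bookkeeping involving $x_{0} = (0,1,0,1)$ that relates the off-diagonal components $\Phi_{1,0}(x)$ and $\Phi_{0,1}(x)$ to $S(x)$ and $\overline{S(x)}$.
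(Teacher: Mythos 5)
Your proposal is correct and is exactly the paper's argument: the paper's proof of Theorem \ref{Basis-Phi} consists precisely of citing Theorem \ref{Justification} to obtain $\mathcal{B}_1$ and then applying Lemma \ref{Bases-Lemma} to produce $\mathcal{B}_0$. Your additional explanation of why the basis $\mathcal{B}_0 = \Phi_{1,0}(x_0)(\mathcal{B}_1)$ converts the matrices of $S(x)$ and $\overline{S(x)}$ into those of $\Phi_{1,0}(x)$ and $\Phi_{0,1}(x)$ is accurate and merely spells out what the paper leaves implicit.
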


\begin{proof}
Follows from Lemma \ref{Bases-Lemma} and Theorem \ref{Justification}.
\end{proof}

By means of the theory of generalized Suslin matrices, we are able to give a conceptual explanation for the \textbf{Generalized Key Lemma} proven in \cite{JR}:

\begin{Thm}\label{GeneralizedKeyLemma-1}
Let $n \geq 0$. For all $x = (p,a,f,b) \in R^{n+1} \oplus {(R^{n+1})}^{\vee} = R^{n+1} \oplus R^{n+1}$ and $\varphi \in SL_{n+1}(R)$, one has
\begin{center}
$\Phi_{1,0}(\varphi^{-1}(p,a),(f,b)\circ \varphi) = \Lambda_{0}({\varphi}^{-1})\Phi_{1,0}(x)\Lambda_{1}(\varphi)$.
\end{center}
\end{Thm}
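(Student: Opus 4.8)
The plan is to prove the identity not merely on the graded pieces, but at the level of the full Clifford action on $\mathrm{End}(\Lambda(R^{n+1}))$, exploiting the functoriality of the exterior algebra, and only at the end to read off the relevant block. Recall that any $\varphi \in SL_{n+1}(R) \subseteq GL_{n+1}(R)$ induces a $\mathbb{Z}/2\mathbb{Z}$-graded $R$-algebra automorphism $\Lambda(\varphi)$ of $\Lambda(R^{n+1})$, characterized by $\Lambda(\varphi)(v_1 \wedge \cdots \wedge v_r) = \varphi(v_1) \wedge \cdots \wedge \varphi(v_r)$, satisfying $\Lambda(\varphi)^{-1} = \Lambda(\varphi^{-1})$ and restricting to $\Lambda_0(\varphi)$ and $\Lambda_1(\varphi)$ on the even and odd parts. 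Writing $x' = (p,a) \in R^{n+1}$ and $x'' = (f,b) \in (R^{n+1})^\vee$, so that $\Phi(x) = l_{x'} + d_{x''}$, the first step is to establish the two conjugation formulas
\[
\Lambda(\varphi^{-1}) \circ l_{x'} \circ \Lambda(\varphi) = l_{\varphi^{-1}(x')} \qquad \text{and} \qquad \Lambda(\varphi^{-1}) \circ d_{x''} \circ \Lambda(\varphi) = d_{x'' \circ \varphi}
\]
as endomorphisms of $\Lambda(R^{n+1})$.

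The first formula follows immediately from the fact that $\Lambda(\varphi^{-1})$ is an algebra homomorphism: for $\omega \in \Lambda(R^{n+1})$ one computes $\Lambda(\varphi^{-1})\big(x' \wedge \Lambda(\varphi)(\omega)\big) = \varphi^{-1}(x') \wedge \omega$. For the second formula I would evaluate both sides on a monomial $a_1 \wedge \cdots \wedge a_r$: applying $\Lambda(\varphi)$, then $d_{x''}$ via the explicit formula recalled in Section \ref{2.2}, then $\Lambda(\varphi^{-1})$ yields $\sum_{i=1}^{r} (-1)^{i+1} x''(\varphi(a_i))\, a_1 \wedge \cdots \wedge \widehat{a_i} \wedge \cdots \wedge a_r$, and since $x''(\varphi(a_i)) = (x'' \circ \varphi)(a_i)$ this is exactly $d_{x'' \circ \varphi}(a_1 \wedge \cdots \wedge a_r)$. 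Adding the two formulas, and using that $\Phi$ is $R$-linear in $x$, gives the conceptually clean statement
\[
\Lambda(\varphi^{-1}) \circ \Phi(x) \circ \Lambda(\varphi) = l_{\varphi^{-1}(x')} + d_{x'' \circ \varphi} = \Phi\big(\varphi^{-1}(p,a),\, (f,b) \circ \varphi\big).
\]

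The final step is to pass to the $\mathbb{Z}/2\mathbb{Z}$-graded components. Since $\Lambda(\varphi)$ preserves the grading, its restriction to $\Lambda_1(R^{n+1})$ is $\Lambda_1(\varphi)$ and the restriction of $\Lambda(\varphi^{-1})$ to $\Lambda_0(R^{n+1})$ is $\Lambda_0(\varphi^{-1})$. Reading off the component $\Lambda_1(R^{n+1}) \to \Lambda_0(R^{n+1})$ of the last displayed identity therefore gives precisely $\Phi_{1,0}(\varphi^{-1}(p,a), (f,b)\circ\varphi) = \Lambda_0(\varphi^{-1}) \Phi_{1,0}(x) \Lambda_1(\varphi)$, as required.

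I expect the only delicate point to be the contraction formula. The subtlety is bookkeeping: the vector part transforms by $\varphi^{-1}$ while the covector part transforms covariantly, by precomposition $x'' \mapsto x'' \circ \varphi$ rather than by $\varphi^{-1}$. This contravariance between the two halves of $H(R^{n+1})$ is exactly what leaves the hyperbolic form $q$ invariant, and it is the conceptual heart of the statement; once the signs in the definition of $d_{x''}$ are tracked, it drops out of the explicit formula with no real computation. It is worth noting that the argument uses only the invertibility of $\varphi$, so the hypothesis can in fact be relaxed from $SL_{n+1}(R)$ to $GL_{n+1}(R)$.
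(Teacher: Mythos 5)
Your proposal is correct and follows essentially the same route as the paper, whose proof consists precisely of the assertion that a direct computation gives $l_{\varphi^{-1}(p,a)} + d_{(f,b)\circ \varphi} = \Lambda_{0}(\varphi^{-1})(l_{(p,a)} + d_{(f,b)})\Lambda_{1}(\varphi)$; you simply carry out that computation explicitly by splitting it into the two conjugation formulas for $l_{x'}$ and $d_{x''}$ and then restricting to the odd-to-even block. Your closing remark that only $\varphi \in GL_{n+1}(R)$ is needed is accurate and consistent with the paper's own observation that no normalization hypothesis on $x$ is required.
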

\begin{proof}
A direct computation shows that indeed
\begin{center}
$l_{\varphi^{-1}(p,a)} + d_{(f,b)\circ \varphi} = \Lambda_{0}({\varphi}^{-1})(l_{(p,a)} + d_{(f,b)}) \Lambda_{1}(\varphi)$.
\end{center}
\end{proof}

Theorem \ref{GeneralizedKeyLemma-1} recovers the statement proven in \cite{JR}:

\begin{Kor}\label{GeneralizedKeyLemma-2}
Let $n \geq 0$. For all $x = (p,a,f,b) \in R^{n+1} \oplus {(R^{n+1})}^{\vee} = R^{n+1} \oplus R^{n+1}$ and $\varphi \in SL_{n+1}(R)$, one has
\begin{center}
$S_{n+1}((b,f) \circ \varphi, \varphi^{-1}(a,p)) = B_{\varphi}^{-1} S_{n+1}(b,f,a,p)A_{\varphi}$,
\end{center}
where $A_{\varphi}$ and $B_{\varphi}$ are the matrices representing the homomorphisms $\Lambda_{1}(\varphi)$ and $\Lambda_{0}(\varphi)$ respectively for some chosen bases of $\Lambda_{1}(R^{n+1})$ and of $\Lambda_{0}(R^{n+1})$.
\end{Kor}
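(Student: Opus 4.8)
The plan is to translate the endomorphism-level identity of Theorem \ref{GeneralizedKeyLemma-1} into a matrix identity by fixing the ordered bases $\mathcal{B}_1$ of $\Lambda_{1}(R^{n+1})$ and $\mathcal{B}_0$ of $\Lambda_{0}(R^{n+1})$ furnished by Theorem \ref{Basis-Phi}. With respect to these bases, $\Phi_{1,0}(y)$ is represented by $S_{n+1}(b,f,a,p)$ for every $y = (p,a,f,b)$, so it suffices to pass from maps to matrices in the equation of Theorem \ref{GeneralizedKeyLemma-1} and to read off both sides.

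First I would record the matrices of the remaining factors. By definition $A_{\varphi}$ and $B_{\varphi}$ represent $\Lambda_{1}(\varphi)$ and $\Lambda_{0}(\varphi)$ with respect to $\mathcal{B}_1$ and $\mathcal{B}_0$. Since $\Lambda_{0}$ is a functor, $\Lambda_{0}(\varphi^{-1}) = \Lambda_{0}(\varphi)^{-1}$, so it is represented by $B_{\varphi}^{-1}$. Composing $\Lambda_{1} \xrightarrow{\Lambda_{1}(\varphi)} \Lambda_{1} \xrightarrow{\Phi_{1,0}(x)} \Lambda_{0} \xrightarrow{\Lambda_{0}(\varphi^{-1})} \Lambda_{0}$ and taking matrices, with the codomain basis on the left and the domain basis on the right, turns the right-hand side of Theorem \ref{GeneralizedKeyLemma-1} into the product $B_{\varphi}^{-1} S_{n+1}(b,f,a,p) A_{\varphi}$, which is exactly the right-hand side of the corollary.

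It then remains to identify the matrix of the left-hand side $\Phi_{1,0}(\varphi^{-1}(p,a),(f,b)\circ\varphi)$. Applying Theorem \ref{Basis-Phi} to the transformed element $(p',a',f',b') := (\varphi^{-1}(p,a),(f,b)\circ\varphi)$ shows that this matrix is $S_{n+1}(b',f',a',p')$, and one must check that, under the standing identifications of $R^{n+1}$ with its dual and the fixed ordering of coordinates, this equals $S_{n+1}((b,f)\circ\varphi,\,\varphi^{-1}(a,p))$.

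I expect this last identification to be the only delicate point, and it is purely bookkeeping: one matches the argument order $(b,f,a,p)$ dictated by Theorem \ref{Basis-Phi} against the order in which $\varphi$ and $\varphi^{-1}$ act on the two slots of the Suslin matrix, and keeps track of the convention identifying a row in $R^{n+1}$ with a functional. Once these conventions are pinned down consistently, the two displayed matrices agree, and the corollary follows immediately from Theorems \ref{GeneralizedKeyLemma-1} and \ref{Basis-Phi}.
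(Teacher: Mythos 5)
Your proposal is correct and follows exactly the paper's route: the paper's proof is the one-line observation that the corollary follows from Theorem \ref{GeneralizedKeyLemma-1} and Theorem \ref{Basis-Phi}, and you have simply spelled out the base-change bookkeeping that this entails. No gap.
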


\begin{proof}
This follows from Theorem \ref{GeneralizedKeyLemma-1} and Theorem \ref{Basis-Phi}.
\end{proof}

Note that we do not assume that $q(x)=1$ as in \cite[Theorem 3.3]{JR}.

\end{document}